\documentclass[11pt]{article}

\usepackage[T1]{fontenc}
\usepackage[utf8]{inputenc}
\usepackage{lmodern}
\usepackage[margin=1in]{geometry}
\usepackage{microtype}

\usepackage{amsmath,amssymb,amsthm,mathtools}
\usepackage{bm}

\usepackage{graphicx}
\graphicspath{{figures/}}
\usepackage{booktabs}
\usepackage{array}

\usepackage[shortlabels]{enumitem}
\usepackage[numbers,sort&compress]{natbib}
\usepackage{xcolor}
\usepackage[colorlinks=true,linkcolor=blue,citecolor=blue,urlcolor=blue]{hyperref}
\usepackage[capitalise,noabbrev]{cleveref}

\theoremstyle{plain}
\newtheorem{theorem}{Theorem}[section]
\newtheorem{lemma}[theorem]{Lemma}

\newtheorem{corollary}[theorem]{Corollary}

\theoremstyle{definition}
\newtheorem{definition}[theorem]{Definition}

\theoremstyle{remark}

\newcommand{\R}{\mathbb{R}}

\newcommand{\E}{\mathbb{E}}
\newcommand{\Pp}{\mathbb{P}}
\newcommand{\Sph}{\mathbb{S}}
\newcommand{\one}{\mathbf{1}}
\newcommand{\norm}[1]{\left\lVert #1\right\rVert}
\newcommand{\ip}[2]{\left\langle #1,#2\right\rangle}
\newcommand{\abs}[1]{\left\lvert #1\right\rvert}

\DeclareMathOperator{\cone}{cone}
\DeclareMathOperator{\range}{range}

\title{Small-Ball Marginals Do Not Control Restricted Eigenvalues\\
by Euclidean Gaussian Width}
\author{Jinze Zhao\\University of California, San Diego\\\texttt{jiz419@ucsd.edu}}
\date{}

\begin{document}

\maketitle

% \begin{center}
%   \small\emph{Research draft. This manuscript has not been peer reviewed;
%   the literature-status statement is necessarily provisional.}
% \end{center}

\begin{abstract}
Banerjee, Chen, and Sivakumar asked at COLT 2015 whether a uniform
small-ball condition on the rows of a random design matrix forces a
restricted-eigenvalue lower bound whose sample complexity is governed by
the ordinary Euclidean Gaussian width of an arbitrary spherical subset.
We give a negative answer to the natural distribution-free formulation of
that question.  For every sample size $n$, we construct a centered,
genuinely heavy-tailed row distribution in dimension $p=4^n+1$ and a set
$A=C\cap\Sph^{p-1}$, where $C$ is a closed polyhedral convex cone, such
that
\[
  \inf_{v\ne 0}\Pp\!\left(
    \abs{\ip{Z}{v}}\ge \frac{\norm{v}_2}{\sqrt{2}}
  \right)\ge \frac1{12}
  \quad\text{and}\quad
  w(A)<2.
\]
Nevertheless, for the matrix $X$ with $n$ independent copies of $Z$ as
rows,
\[
  \Pp\!\left(\inf_{u\in A}\norm{Xu}_2^2=0\right)
  \ge 1-\exp(-2^n).
\]
Thus no positive constants depending only on the fixed small-ball
parameters can yield a lower bound of the proposed form
$c_1n-c_2w(A)^2$ with high probability.  The construction isolates the
obstruction: a marginal small-ball lower bound controls every fixed
direction, but does not control the distribution-dependent complexity of
searching over many directions.  We state the quantifiers explicitly and
discuss why isotropic or upper-tail assumptions lead to a different,
still meaningful problem.

\end{abstract}

% \newpage
% \tableofcontents
% \newpage

\section{Introduction}
\label{sec:introduction}

Restricted-eigenvalue inequalities are a basic route from random-design
assumptions to guarantees for sparse and structured estimators.  If
$X\in\R^{n\times p}$ and $A\subset\Sph^{p-1}$ is a restricted set of
directions, the central quantity is
\[
  \inf_{u\in A}\norm{Xu}_2^2.
\]
For sub-Gaussian rows, generic-chaining arguments connect this quantity to
the Euclidean Gaussian width
\[
  w(A):=\E_g\sup_{u\in A}\ip{g}{u},
  \qquad g\sim N(0,I_p).
\]
This motivates bounds of the schematic form
\begin{equation}
  \inf_{u\in A}\norm{Xu}_2^2
  \ge c_1 n-c_2 w(A)^2.
  \label{eq:schematic-bound}
\end{equation}

In a COLT 2015 open-problem note, \citet{banerjee2015open} asked whether
the same Gaussian-width sample complexity follows for heavy-tailed rows
from a small-ball assumption alone.  In their notation, a row $Z$ obeys
the small-ball condition on a set $E\subset\R^p$ if there are
$\alpha,\beta>0$ such that
\begin{equation}
  \inf_{v\in E}
  \Pp\bigl(\abs{\ip{Z}{v}}\ge \alpha\norm{v}_2\bigr)
  \ge \beta.
  \label{eq:intro-small-ball}
\end{equation}
The note defines a ``spherical cap'' simply as a subset of the unit
sphere and asks for \eqref{eq:schematic-bound} for every such set.  It
does not impose isotropy, independence of coordinates, or an upper-tail
condition on $Z$.

The answer under these stated assumptions is no.  Our counterexample has
three interacting ingredients.  First, the restricted directions are
many almost-parallel rays, so their conic Gaussian width stays bounded as
their number grows exponentially.  Second, an explicit dual frame lets
one coordinate label control the measurement in each ray exactly.  Third,
each label is independently absent with probability $1/2$ in each row.
Every fixed direction is still visible with a constant probability---in
fact, a fourth-moment calculation gives a uniform small-ball condition on
all of $\R^p$---but among exponentially many rays, one ray is absent from
all $n$ rows with overwhelming probability.

Our main contribution is an explicit negative theorem with fixed
constants $\alpha=1/\sqrt2$ and $\beta=1/12$.  The restricted set is not
merely finite: it is $C\cap\Sph^{p-1}$ for a closed, finitely generated
convex cone $C$.  The row law is centered, has finite covariance and
polynomial tails, and every standard coordinate has infinite fourth
moment.  All estimates are elementary and quantitative.

This result does not contradict the small-ball method of
\citet{mendelson2015learning,koltchinskii2015bounding,tropp2015convex}.
The general method involves a \emph{mean empirical width} adapted to the
row distribution, rather than the Euclidean Gaussian width alone.  Under
sub-Gaussian increment assumptions these complexities can be compared;
the construction below shows that a marginal small-ball lower bound does
not provide that comparison.

\paragraph{Status and scope.}
Targeted searches through August 16, 2026 located no publication that
states this counterexample or explicitly resolves the 2015 question.
That is negative evidence, not a certification of novelty.  More
importantly, the phrase ``suitable constants'' in the original note is
informal.  We therefore formalize and refute the natural high-dimensional
reading in which the constants are controlled by the fixed small-ball
parameters uniformly over the dimension, the restricted set, and the row
law.  If the constants may depend arbitrarily on all of those objects and
on $n$, the proposed sample-complexity statement has no uniform content.

The remainder of the paper states this interpretation precisely,
constructs the example, proves each estimate, and then records the limits
of the negative result.

\section{The question and its uniform interpretation}
\label{sec:problem}

Throughout, $\Sph^{p-1}:=\{u\in\R^p:\norm{u}_2=1\}$.  We use the
one-sided Gaussian-width convention in the original open problem:
\begin{equation}
  w(A):=\E\sup_{u\in A}\ip{g}{u},
  \qquad g\sim N(0,I_p).
  \label{eq:gaussian-width}
\end{equation}

\begin{definition}[Uniform small-ball-to-Gaussian-width assertion]
\label{def:uniform-assertion}
Fix $\alpha,\beta>0$.  The uniform assertion says that there are
$c_1=c_1(\alpha,\beta)>0$, $c_2=c_2(\alpha,\beta)>0$, and a sequence
$q_n=q_n(\alpha,\beta)\to0$ such that the following holds simultaneously
for every $n,p$, every random vector $Z\in\R^p$ satisfying
\begin{equation}
  \inf_{v\in\R^p\setminus\{0\}}
  \Pp\bigl(\abs{\ip{Z}{v}}\ge\alpha\norm{v}_2\bigr)
  \ge\beta,
  \label{eq:global-small-ball}
\end{equation}
and every $A\subset\Sph^{p-1}$.  If the rows of $X\in\R^{n\times p}$
are independent copies of $Z$, then
\begin{equation}
  \Pp\left(
    \inf_{u\in A}\norm{Xu}_2^2
    \ge c_1n-c_2w(A)^2
  \right)\ge 1-q_n.
  \label{eq:uniform-assertion}
\end{equation}
\end{definition}

Condition \eqref{eq:global-small-ball} is stronger than necessary for
the original question, which only postulates the condition on some set
$E$.  Using the global form removes any ambiguity about the relation
between $E$ and $A$.  The essential point in
\cref{def:uniform-assertion} is that $c_1,c_2$ are determined by the
fixed small-ball parameters, not by the dimension, the set $A$, the
sample size, or otherwise uncontrolled features of the row law.

Our result refutes this assertion at one fixed pair of small-ball
constants.

\begin{theorem}[Counterexample]
\label{thm:counterexample}
Set $\alpha_0=1/\sqrt2$ and $\beta_0=1/12$.  For every integer $n\ge1$
there exist $p=4^n+1$, a closed polyhedral convex cone $C\subset\R^p$,
$A=C\cap\Sph^{p-1}$, and a centered random vector $Z\in\R^p$ such that:
\begin{enumerate}[(i)]
  \item $Z$ has finite covariance, polynomial tails, and an infinite
  fourth moment in every standard coordinate;
  \item $Z$ satisfies \eqref{eq:global-small-ball} with
  $(\alpha,\beta)=(\alpha_0,\beta_0)$;
  \item $w(A)<2$; and
  \item if $X$ has $n$ independent copies of $Z$ as rows, then
  \begin{equation}
    \Pp\left(\inf_{u\in A}\norm{Xu}_2^2=0\right)
    \ge 1-\exp(-2^n).
    \label{eq:theorem-failure}
  \end{equation}
\end{enumerate}
\end{theorem}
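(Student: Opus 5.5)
The plan is to build the example from $N:=4^n$ almost-parallel rays. Each ray's measurement will be an independent "on/off" variable, so that some ray is switched off in all $n$ rows with overwhelming probability. Index coordinates by $0,1,\dots,N$, so $p=N+1$. Fix a small parameter $t$ with $t\asymp 1/\sqrt n$ (say $t=1/(4\sqrt{n+1})$). Let $C=\cone\{e_0+te_j: 1\le j\le N\}$, which is closed and polyhedral since it is finitely generated, and set $A=C\cap\Sph^{p-1}$. The row law is
\[
Z=G\,h+\sum_{j=1}^N W_j\,\frac{e_j}{t},\qquad h:=e_0-\frac1t\sum_{j=1}^N e_j,\qquad W_j:=B_j(\xi_j+\eta).
\]
Here all variables are independent: $B_j\sim\mathrm{Bernoulli}(1/2)$, the $\xi_j$ and $\eta$ are Rademacher, and $G$ is symmetric with density $\propto(1+|x|)^{-4}$, so $G$ has finite variance but infinite fourth moment. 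The pair $(h,e_j/t)$ is the explicit dual frame: $\ip{h}{e_0+te_j}=0$ and $\ip{e_k/t}{e_0+te_j}=\delta_{jk}$. Hence $\ip{Z}{e_0+te_j}=W_j$ exactly. Item (i) is then immediate: $Z$ is centered, every coordinate of $Z$ contains the term $\pm G$ (or $G/t$), which has infinite fourth moment, and every coordinate has finite variance and polynomial tails.

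For item (iv), note that $\ip{Z}{e_0+te_j}=0$ whenever $B_j=0$. Let $B^{(i)}_j$ denote the Bernoulli variable $B_j$ in row $i$. If some $j$ has $B^{(i)}_j=0$ for all rows $i$, then $Xu=0$ for $u=(e_0+te_j)/\norm{e_0+te_j}_2\in A$. For each $j$ this happens with probability $2^{-n}$, independently over $j$. So the failure probability is at most
\[
(1-2^{-n})^{4^n}\le \exp(-4^n2^{-n})=\exp(-2^n).
\]
For item (iii), take $u=\sum_j\lambda_j(e_0+te_j)$ with $\lambda_j\ge0$ and $\norm{u}_2=1$. Then $\Lambda:=\sum_j\lambda_j\le 1$ and
\[
\ip{g}{u}=\Lambda g_0+t\sum_j\lambda_j g_j\le |g_0|+t\max_j g_j^+ .
\]
Taking expectations gives $w(A)\le\sqrt{2/\pi}+t\,\E\max_{j\le N}g_j^+$. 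Since $\E\max_{j\le N}g_j^+\le\sqrt{2\log N}+1$, the choice $t\asymp1/\sqrt n$ makes the total less than $2$. The constant in $t$ is tuned so that this holds for all $n\ge1$.

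The main obstacle is item (ii): the small-ball bound with $\alpha_0=1/\sqrt2$ and $\beta_0=1/12$ must hold uniformly over all $v\in\R^p$. Write $s=\sum_{j\ge1}v_j$. Then
\[
\ip{Z}{v}=(v_0-s/t)\,G+\frac1t\sum_j v_jB_j\xi_j+\frac{\eta}{t}\sum_j v_jB_j=:X_1+X_2,
\]
where $X_1=(v_0-s/t)G$ and $X_2$ is the remaining bounded part. The two parts are independent and symmetric, so $\Pp(|X_1+X_2|\ge a)\ge\frac12\max\{\Pp(|X_1|\ge a),\Pp(|X_2|\ge a)\}$. I will split into two cases, after normalizing $G$ so that $\Pp(|G|\ge r)$ is explicit.
\begin{itemize}
\item If $|v_0-s/t|$ is a fixed fraction of $\norm{v}_2$, the $G$ term alone gives the bound.
\item Otherwise $s\approx tv_0$, and the role of the common sign $\eta$ appears. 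Conditional on $(B_j)$, $X_2$ is a Rademacher sum whose second moment is $t^{-2}\bigl(\sum_j v_j^2B_j+(\sum_j v_jB_j)^2\bigr)$. Averaging over $B$ gives at least $t^{-2}\bigl(\tfrac12\sum_j v_j^2+\tfrac14 s^2\bigr)$. This is $\gtrsim t^{-2}\sum_j v_j^2+v_0^2$, which is $\gtrsim\norm{v}_2^2$ because $t\le1$.
\end{itemize}
Without $\eta$, the second case would only give variance of order $v_0^2/N$. That is why the shared component is essential. In the second case I then apply Paley--Zygmund to $X_2^2$. Since $X_2$ is bounded, $\E X_2^4\le K(\E X_2^2)^2$ for an absolute $K$, which follows from Khintchine's inequality conditionally on $B$ together with a fourth-moment computation for the Bernoulli sums $\sum_j v_jB_j$. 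This yields a constant probability that $|X_2|\ge\norm{v}_2/\sqrt2$. The delicate part is bookkeeping the case threshold and the Paley--Zygmund constants so that both cases give at least $1/12$. If the constants come out too weak, I would rescale $G$ or the $W_j$ by a fixed factor, which leaves items (i), (iii) and (iv) untouched.
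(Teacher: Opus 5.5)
Your construction is a legitimate variant of the paper's. You use the dual vectors $e_j/t$ instead of $V=UG^{-1}$, a shared sign $\eta$ in the label coordinates instead of the $\sqrt{2m}$ scaling, and an additive heavy-tailed term $Gh$ instead of the multiplicative factor $H\ge1$. Items (i), (iii) and (iv) go through as you describe. The gap is in (ii), at exactly the point you flagged, and the fallback you propose (rescaling) does not close it.

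\textbf{Why case 2 falls short of $1/12$.} You combine $X_1$ and $X_2$ only through $\Pp(|X_1+X_2|\ge a)\ge\frac12\Pp(|X_2|\ge a)$. So case 2 needs $\Pp\bigl(|X_2|\ge\norm{v}_2/\sqrt2\bigr)\ge\frac16$, which requires a Paley--Zygmund ratio $\E X_2^4\le K(\E X_2^2)^2$ with $K<6$. The route you describe cannot deliver this.
\begin{enumerate}[(1)]
\item Take $v$ close to $e_0+te_1$ with $v_0\ne v_1/t$. Then case 2 applies and $X_1\ne0$ almost surely.
\item Conditionally on $B$, $X_2=\frac{v_1}{t}B_1(\xi_1+\eta)$ is a Rademacher sum with two equal coefficients. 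Hence $T:=\E[X_2^2\mid B]=2(v_1/t)^2B_1$ and $\E T^2=2(\E T)^2$.
\item So conditional Khintchine (constant $3$) followed by a moment bound for the Bernoulli sum gives at best $K=6$.
\item The resulting case-2 bound is then at best $\frac12\cdot\frac{(1-\theta)^2}{6}<\frac1{12}$, for every admissible $\theta>0$.
\end{enumerate}
Rescaling $G$ or the $W_j$ changes neither $K$ nor the factor $\frac12$; it only lets you push $\theta$ toward $0$. As planned, you would get every $\beta<1/12$ but not $\beta_0=1/12$. Even that requires a uniform proof of $\E T^2\le 2(\E T)^2$, which you have not supplied.

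\textbf{Possible repairs.}
\begin{enumerate}[(a)]
\item Use the exact identity $\E(\sum_i a_i\epsilon_i)^4=3(\sum_i a_i^2)^2-2\sum_i a_i^4$ instead of Khintchine. At the single-label direction the true ratio is $4$, but you still need a uniform bound strictly below $6$. It helps to note that $\xi_j+\eta=\eta(1+\xi_j\eta)$. Hence $W_j=2\eta D_j$ with $D_j$ i.i.d.\ Bernoulli$(1/4)$ independent of $\eta$, and $|X_2|=\frac2t\bigl|\sum_j v_jD_j\bigr|$.
\item Split less wastefully in case 2, using $\Pp(|X_1+X_2|\ge a)\ge\Pp(|X_2|\ge 2a)\bigl(1-\frac12\Pp(|X_1|\ge a)\bigr)$. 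In case 2, $\Pp(|X_1|\ge a)$ is at most the case-1 threshold probability, which you can tune to $1/6$. The factor $\frac12$ then becomes $\frac{11}{12}$, and $K\le 6$ suffices.
\item Adopt the paper's device and make the heavy tail multiplicative: $Z=HB$ with $H\ge1$ independent of a bounded $B$. Then $|\langle Z,v\rangle|\ge|\langle B,v\rangle|$ and nothing is lost to symmetrization. The paper then needs only one Paley--Zygmund step with $\theta=\frac12$ and $\E S^4\le 3(\E S^2)^2$, giving exactly $\frac14\cdot\frac13=\frac1{12}$. The required $\E S^2\ge\norm{v}_2^2$ follows from $G\preceq mI_m$.
\end{enumerate}
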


\begin{corollary}[Negative answer to the uniform formulation]
\label{cor:negative-answer}
The assertion in \cref{def:uniform-assertion} is false for
$(\alpha,\beta)=(1/\sqrt2,1/12)$.  More concretely, for every fixed
$c_1,c_2>0$, the proposed inequality fails for the examples in
\cref{thm:counterexample} with probability at least
$1-\exp(-2^n)$ whenever $n>4c_2/c_1$.
\end{corollary}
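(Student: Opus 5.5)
The corollary follows directly from \cref{thm:counterexample}: I would compare the deterministic right-hand side of \eqref{eq:uniform-assertion} with the event in \eqref{eq:theorem-failure}, so no new probabilistic estimate is needed.

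\emph{Step 1 (fixed constants).} Fix arbitrary $c_1,c_2>0$ and an integer $n>4c_2/c_1$. For this $n$, take $p=4^n+1$, the cone $C$, the set $A=C\cap\Sph^{p-1}$ and the row law $Z$ supplied by \cref{thm:counterexample}. By part (iii), $w(A)<2$, so $w(A)^2<4$. Hence
\[
  c_1n-c_2w(A)^2> c_1n-4c_2>0 .
\]
The last inequality is exactly the hypothesis $n>4c_2/c_1$. So the proposed lower bound is a strictly positive number, determined by $c_1,c_2,n$ and $A$ alone.

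\emph{Step 2 (failure event).} On the event $\{\inf_{u\in A}\norm{Xu}_2^2=0\}$, the inequality $\inf_{u\in A}\norm{Xu}_2^2\ge c_1n-c_2w(A)^2$ would force $0\ge c_1n-c_2w(A)^2>0$, which is impossible. So this event is contained in the failure event of \eqref{eq:uniform-assertion}. By part (iv), the failure probability is therefore at least $1-\exp(-2^n)$. This is the concrete claim of the corollary.

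\emph{Step 3 (refuting \cref{def:uniform-assertion}).} Suppose the assertion held at $(\alpha,\beta)=(1/\sqrt2,1/12)$ with some $c_1,c_2>0$ and some $q_n\to0$. By part (ii), each $Z$ from the theorem satisfies \eqref{eq:global-small-ball} with these parameters, and $A\subset\Sph^{p-1}$, so the assertion applies to these examples. It would then give success probability at least $1-q_n$, while Step 2 gives success probability at most $\exp(-2^n)$, for every $n>4c_2/c_1$. Thus $1-q_n\le \exp(-2^n)$ for all such $n$. Letting $n\to\infty$, the left side tends to $1$ and the right side tends to $0$, a contradiction.

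The only point needing care is the order of quantifiers. The constants $c_1,c_2$ are fixed first, through $(\alpha,\beta)$ only; the dimension $p=4^n+1$ and the law of $Z$ are chosen afterwards, depending on $n$. \cref{def:uniform-assertion} allows exactly this order, since it quantifies over every $n$, $p$ and $Z$ satisfying \eqref{eq:global-small-ball}. All the substantive work lies in \cref{thm:counterexample}, which we take as given here.
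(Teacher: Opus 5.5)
Your proposal is correct and follows the paper's proof essentially step for step. The ingredients match: $w(A)^2<4$ gives $c_1n-c_2w(A)^2>c_1n-4c_2>0$ for $n>4c_2/c_1$, the annihilation event of part (iv) lies inside the failure event, and a success probability of at most $\exp(-2^n)$ cannot be bounded below by $1-q_n$ with $q_n\to0$. Your Step 3 just makes explicit the appeal to part (ii) and the limit $n\to\infty$, which the paper leaves implicit.

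One small point that both you and the paper pass over: $w(A)<2$ yields $w(A)^2<4$ only because $w(A)\ge0$. This holds since $\E\sup_{u\in A}\ip{g}{u}\ge\E\ip{g}{u_0}=0$ for any fixed $u_0\in A$.
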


We now construct the example and prove the theorem from first principles.

\section{Construction}
\label{sec:construction}

Fix an integer $m\ge2$ for the moment and work in $\R^{m+1}$ with
standard orthonormal basis $e_0,e_1,\ldots,e_m$.  Define
\begin{equation}
  \delta:=\frac1{\sqrt{\log(2m)}}
  \quad\text{and}\quad
  u_j:=\frac{e_0+\delta e_j}{\sqrt{1+\delta^2}},
  \qquad 1\le j\le m.
  \label{eq:directions}
\end{equation}
Let $U=[u_1\ \cdots\ u_m]\in\R^{(m+1)\times m}$ and put
\begin{equation}
  G:=U^\top U,
  \qquad
  V:=UG^{-1},
  \qquad
  h:=\frac{\delta e_0-\sum_{j=1}^m e_j}{\sqrt{m+\delta^2}}.
  \label{eq:dual-and-normal}
\end{equation}
The inverse in \eqref{eq:dual-and-normal} exists by
\cref{lem:linear-algebra} below.  The restricted cone and its spherical
section are
\begin{equation}
  C:=\cone\{u_1,\ldots,u_m\},
  \qquad A:=C\cap\Sph^m.
  \label{eq:cone-set}
\end{equation}

The random row is defined next.  Let $Y_1,\ldots,Y_m$ be independent
with
\begin{equation}
  \Pp(Y_j=0)=\frac12,
  \qquad
  \Pp(Y_j=1)=\Pp(Y_j=-1)=\frac14.
  \label{eq:Y-law}
\end{equation}
Write $Y=(Y_1,\ldots,Y_m)^\top$, and let $\eta$ be an independent
Rademacher random variable.  Finally, let $H$ be independent of
$(Y,\eta)$ with the Pareto tail
\begin{equation}
  \Pp(H\ge t)=t^{-3},\qquad t\ge1.
  \label{eq:H-law}
\end{equation}
Thus $H\ge1$ almost surely, $\E H^2<\infty$, and $\E H^4=\infty$.
Set
\begin{equation}
  B:=\sqrt{2m}\,VY+\eta h,
  \qquad
  Z:=HB.
  \label{eq:row-law}
\end{equation}

Two features of this definition should be noted.  The dual matrix $V$
makes the $j$th label $Y_j$ exactly observable in direction $u_j$.
The normal term $\eta h$ is needed to keep the small-ball estimate
nondegenerate in the one-dimensional complement of $\range(U)$.

\section{Proofs}
\label{sec:proofs}

\subsection{Linear algebra of the frame}

\begin{lemma}[Gram matrix, duality, and orthogonal decomposition]
\label{lem:linear-algebra}
The matrix $U$ has rank $m$, and
\begin{equation}
  G=\frac{\one\one^\top+\delta^2 I_m}{1+\delta^2}.
  \label{eq:gram}
\end{equation}
The eigenvalues of $G$ are
\begin{equation}
  \frac{m+\delta^2}{1+\delta^2}
  \quad\text{once},
  \qquad
  \frac{\delta^2}{1+\delta^2}
  \quad\text{with multiplicity }m-1.
  \label{eq:gram-eigenvalues}
\end{equation}
Consequently, $G$ is positive definite and $G\preceq mI_m$.  Moreover,
$h$ is a unit vector orthogonal to $\range(U)$,
\begin{equation}
  U^\top V=V^\top U=I_m,
  \label{eq:dual-identity}
\end{equation}
and every $v\in\R^{m+1}$ has a unique representation
\begin{equation}
  v=Ua+bh,
  \qquad a\in\R^m,\quad b\in\R,
  \label{eq:decomposition}
\end{equation}
for which
\begin{equation}
  \norm{v}_2^2=a^\top Ga+b^2.
  \label{eq:v-norm}
\end{equation}
\end{lemma}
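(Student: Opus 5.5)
The plan is a direct computation in the coordinates $e_0,e_1,\ldots,e_m$, done in the order the lemma states its claims.

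\emph{Gram matrix.} Since $\ip{e_0+\delta e_i}{e_0+\delta e_j}=1+\delta^2\one_{\{i=j\}}$, we get $\ip{u_i}{u_j}=(1+\delta^2\one_{\{i=j\}})/(1+\delta^2)$. This is \eqref{eq:gram}.

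\emph{Eigenvalues.} The vector $\one$ is an eigenvector of $\one\one^\top+\delta^2I_m$ with eigenvalue $m+\delta^2$. Every vector orthogonal to $\one$ is an eigenvector with eigenvalue $\delta^2$, and this eigenspace has dimension $m-1$. Dividing by $1+\delta^2$ gives \eqref{eq:gram-eigenvalues}.

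\emph{Consequences.} All eigenvalues are positive because $\delta>0$, so $G$ is positive definite. Hence $U^\top U$ is invertible, which forces $U$ to have rank $m$. For the bound $G\preceq mI_m$, the largest eigenvalue satisfies $(m+\delta^2)/(1+\delta^2)\le m$, which is equivalent to $\delta^2\le m\delta^2$ and holds since $m\ge2$. The small eigenvalue is at most $1\le m$.

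\emph{The vector $h$.} Its squared norm is $(\delta^2+m)/(m+\delta^2)=1$. For orthogonality, $\ip{e_0+\delta e_j}{\delta e_0-\sum_k e_k}=\delta-\delta=0$ for each $j$. So $h$ is orthogonal to every $u_j$, hence to $\range(U)$.

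\emph{Duality.} We have $U^\top V=U^\top UG^{-1}=GG^{-1}=I_m$. Transposing, and using that $G^{-1}$ is symmetric, gives $V^\top U=I_m$.

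\emph{Decomposition.} $\range(U)$ has dimension $m$ and $h$ is a unit vector orthogonal to it, so $\R^{m+1}=\range(U)\oplus\R h$ is an orthogonal direct sum. Every $v$ therefore splits uniquely as $v=Ua+bh$:
\begin{itemize}
  \item $b=\ip{v}{h}$;
  \item $a$ is unique because $U$ is injective, which follows from rank $m$;
  \item explicitly, $a=V^\top v$, since $V^\top h=G^{-1}U^\top h=0$.
\end{itemize}
Pythagoras then gives $\norm{v}_2^2=\norm{Ua}_2^2+b^2=a^\top Ga+b^2$, which is \eqref{eq:v-norm}.

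None of these steps is a real obstacle. The only places needing care are using $m\ge2$ for $G\preceq mI_m$ and checking the sign in the orthogonality of $h$.
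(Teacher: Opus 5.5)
Your proposal is correct and matches the paper's proof essentially step for step. Both compute the Gram matrix entrywise, read off the spectrum of $\one\one^\top$ on $\range(\one)$ and $\one^\perp$, deduce invertibility and rank $m$ from positivity, check $\delta^2\le m\delta^2$, verify directly that $h$ is a unit normal, obtain $U^\top V=GG^{-1}=I_m$, and conclude with the orthogonal direct sum and Pythagoras. The explicit formula $a=V^\top v$ is a harmless extra.
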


\begin{proof}
From \eqref{eq:directions},
\[
  \ip{u_j}{u_k}
  =\frac{1+\delta^2\mathbf{1}_{\{j=k\}}}{1+\delta^2}.
\]
Thus every off-diagonal entry of $G$ is $1/(1+\delta^2)$ and every
diagonal entry is $1$, which proves \eqref{eq:gram}.  The matrix
$\one\one^\top$ acts as multiplication by $m$ on $\range(\one)$ and
as zero on $\one^\perp$.  Adding $\delta^2I_m$ and dividing by
$1+\delta^2$ gives the eigenvalues in
\eqref{eq:gram-eigenvalues}.  They are strictly positive, so $G$ is
invertible and $U$ has rank $m$.

The largest eigenvalue is at most $m$ because
\[
  \frac{m+\delta^2}{1+\delta^2}\le m
  \quad\Longleftrightarrow\quad
  \delta^2\le m\delta^2,
\]
which holds for $m\ge1$.  Hence $G\preceq mI_m$.

Next, using \eqref{eq:directions} and \eqref{eq:dual-and-normal}, for
each $j$ we obtain
\[
  \ip{u_j}{h}
  =\frac{\delta-\delta}
  {\sqrt{(1+\delta^2)(m+\delta^2)}}=0.
\]
Also,
\[
  \norm{h}_2^2
  =\frac{\delta^2+m}{m+\delta^2}=1.
\]
Therefore $h$ is a unit normal to the $m$-dimensional space
$\range(U)$.  Since $V=UG^{-1}$ and $G=G^\top$,
\[
  U^\top V=U^\top UG^{-1}=GG^{-1}=I_m
\]
and, on taking transposes, $V^\top U=I_m$.  The orthogonal direct sum
$\range(U)\oplus\range(h)=\R^{m+1}$ gives the existence and uniqueness
of \eqref{eq:decomposition}.  Finally, $U^\top h=0$ and
$\norm{h}_2=1$ imply
\[
  \norm{Ua+bh}_2^2
  =a^\top U^\top Ua+b^2
  =a^\top Ga+b^2,
\]
which is \eqref{eq:v-norm}.
\end{proof}

The next identity is the mechanism behind the failure event.

\begin{lemma}[Exact directional labels]
\label{lem:labels}
For every $1\le j\le m$,
\begin{equation}
  \ip{Z}{u_j}=H\sqrt{2m}\,Y_j.
  \label{eq:label-identity}
\end{equation}
\end{lemma}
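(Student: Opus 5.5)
The plan is a direct computation from the definition \eqref{eq:row-law} together with the duality relations of \cref{lem:linear-algebra}. Since $Z=HB$ and $H$ is a scalar, linearity of the inner product gives
\[
  \ip{Z}{u_j}=H\ip{B}{u_j}=H\Bigl(\sqrt{2m}\,\ip{VY}{u_j}+\eta\ip{h}{u_j}\Bigr).
\]
So it suffices to evaluate the two inner products on the right.

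For the normal term, \cref{lem:linear-algebra} shows that $h$ is orthogonal to $\range(U)$. In particular $\ip{h}{u_j}=0$ for every $j$, so the $\eta h$ contribution vanishes identically, whatever the value of $\eta$.

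For the frame term, write $u_j=Ue_j$, where $e_j$ here denotes the $j$th standard basis vector of $\R^m$. Then
\[
  \ip{VY}{u_j}=u_j^\top VY=e_j^\top U^\top VY=e_j^\top Y=Y_j,
\]
using the duality identity $U^\top V=I_m$ from \eqref{eq:dual-identity}. Substituting both evaluations yields $\ip{Z}{u_j}=H\sqrt{2m}\,Y_j$, which is \eqref{eq:label-identity}. The identity holds pointwise on the sample space, not merely in distribution.

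There is no real obstacle here. The only point needing care is notational: one must keep the basis vectors of $\R^m$ distinct from the vectors $e_0,\dots,e_m$ of $\R^{m+1}$ used in \eqref{eq:directions}, and write $U^\top V$ in the correct order, which \eqref{eq:dual-identity} supplies in both orders.
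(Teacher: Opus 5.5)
Your proof is correct and follows essentially the paper's argument. Both proofs split $\ip{Z}{u_j}$ by linearity, discard the normal term using $\ip{h}{u_j}=0$, and extract $Y_j$ from the duality identity \eqref{eq:dual-identity}. The only difference is phrasing: the paper says $V^\top u_j$ is the $j$th standard basis vector, which is the transpose of your computation $e_j^\top U^\top V=e_j^\top$.
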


\begin{proof}
By \eqref{eq:dual-identity}, $V^\top u_j$ is the $j$th standard basis
vector of $\R^m$, and by \cref{lem:linear-algebra},
$\ip{h}{u_j}=0$.  Therefore
\[
  \ip{Z}{u_j}
  =H\left(\sqrt{2m}\,Y^\top V^\top u_j
  +\eta\ip{h}{u_j}\right)
  =H\sqrt{2m}\,Y_j.
\]
\end{proof}

\subsection{The uniform small-ball estimate}

For completeness, we include the elementary form of the
Paley--Zygmund inequality used below.

\begin{lemma}[Paley--Zygmund inequality]
\label{lem:paley-zygmund}
If $Q\ge0$, $0<\E Q<\infty$, $\E Q^2<\infty$, and
$0<\theta<1$, then
\begin{equation}
  \Pp(Q\ge\theta\E Q)
  \ge (1-\theta)^2\frac{(\E Q)^2}{\E Q^2}.
  \label{eq:PZ}
\end{equation}
\end{lemma}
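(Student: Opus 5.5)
We prove \eqref{eq:PZ} with the standard truncation-plus-Cauchy--Schwarz argument. Let $\mu:=\E Q\in(0,\infty)$ and split the mean according to whether $Q$ lies below or above the threshold $\theta\mu$:
\[
  \mu=\E\bigl[Q\one_{\{Q<\theta\mu\}}\bigr]+\E\bigl[Q\one_{\{Q\ge\theta\mu\}}\bigr].
\]
Since $Q\ge0$, the first term is at most $\theta\mu\,\Pp(Q<\theta\mu)\le\theta\mu$. Hence
\[
  \E\bigl[Q\one_{\{Q\ge\theta\mu\}}\bigr]\ge(1-\theta)\mu>0 .
\]

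Next apply the Cauchy--Schwarz inequality to the product $Q\cdot\one_{\{Q\ge\theta\mu\}}$. This is legitimate because $\E Q^2<\infty$, and it gives
\[
  \E\bigl[Q\one_{\{Q\ge\theta\mu\}}\bigr]
  \le\bigl(\E Q^2\bigr)^{1/2}\,\Pp(Q\ge\theta\mu)^{1/2}.
\]
Combining this with the lower bound from the first step, squaring (both sides are nonnegative), and dividing by $\E Q^2$ gives \eqref{eq:PZ}. The division is allowed because $\E Q^2\ge(\E Q)^2>0$.

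There is no genuine obstacle here. The only points needing care are the hypotheses. Positivity of $\E Q$ makes the ratio meaningful and ensures $\E Q^2>0$. Finiteness of $\E Q^2$ justifies the use of Cauchy--Schwarz. We will take $\theta<1$ strictly, although for $\theta=1$ the bound would hold trivially anyway. In the later application, $Q$ will be $\ip{Z}{v}^2$ or a conditional version of it with $H$ factored out. That is why we keep the statement in this general form, requiring only a finite second moment of $Q$, i.e.\ a fourth moment of the bounded part $\ip{B}{v}$.
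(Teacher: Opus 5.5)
Your proof is correct and is essentially identical to the paper's: split $\E Q$ on the event $\{Q\ge\theta\E Q\}$, bound the lower part by $\theta\E Q$, apply Cauchy--Schwarz to $\E[Q\one_{\{Q\ge\theta\E Q\}}]$, then square and divide by $\E Q^2$. One small correction to your closing aside: the lemma cannot be applied to $\ip{Z}{v}^2$ itself, because $\E\ip{Z}{v}^4=\E H^4\,\E S^4=\infty$; the paper instead applies it to $S^2=\ip{B}{v}^2$ and only afterwards uses $H\ge1$.
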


\begin{proof}
Split the expectation according to the event
$D:=\{Q\ge\theta\E Q\}$.  Since $Q<\theta\E Q$ on $D^c$,
\[
  \E Q
  =\E[Q\mathbf{1}_{D^c}]+\E[Q\mathbf{1}_D]
  \le \theta\E Q+\E[Q\mathbf{1}_D].
\]
Rearranging and then applying Cauchy--Schwarz gives
\[
  (1-\theta)\E Q
  \le\E[Q\mathbf{1}_D]
  \le(\E Q^2)^{1/2}\Pp(D)^{1/2}.
\]
Squaring both sides and dividing by $\E Q^2$ proves
\eqref{eq:PZ}.
\end{proof}

\begin{lemma}[Dimension-free small-ball constants]
\label{lem:small-ball}
For the random vector $Z$ in \eqref{eq:row-law},
\begin{equation}
  \inf_{v\in\R^{m+1}\setminus\{0\}}
  \Pp\left(
    \abs{\ip{Z}{v}}\ge\frac{\norm{v}_2}{\sqrt2}
  \right)\ge\frac1{12}.
  \label{eq:proved-small-ball}
\end{equation}
The constants do not depend on $m$.
\end{lemma}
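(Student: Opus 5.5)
Since $H\ge1$ almost surely, $\abs{\ip{Z}{v}}\ge\abs{\ip{B}{v}}$, so it suffices to prove the estimate for $B$. Fix $v\ne0$ and write $v=Ua+bh$ as in \eqref{eq:decomposition}. By \eqref{eq:dual-identity}, $V^\top U=I_m$, and $V^\top h=G^{-1}U^\top h=0$. Hence
\[
  \ip{B}{v}=\sqrt{2m}\,Y^\top a+\eta b=:S.
\]
The plan is to apply \cref{lem:paley-zygmund} to $Q=S^2$ with $\theta=1/2$. This gives
\[
  \Pp\Bigl(S^2\ge \tfrac12\E S^2\Bigr)\ge \tfrac14\,\frac{(\E S^2)^2}{\E S^4},
\]
so the statement reduces to two claims: first, $\E S^2\ge\norm{v}_2^2$; second, $\E S^4\le 3(\E S^2)^2$.

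\textbf{Second moment.} The variables $Y_j$ and $\eta$ are independent, centered and symmetric, with $\E Y_j^2=1/2$ and $\E\eta^2=1$. Therefore
\[
  \E S^2=2m\cdot\tfrac12\norm{a}_2^2+b^2=m\norm{a}_2^2+b^2.
\]
By \cref{lem:linear-algebra}, $G\preceq mI_m$, so $a^\top Ga\le m\norm{a}_2^2$. Combined with \eqref{eq:v-norm}, this gives $\E S^2\ge a^\top Ga+b^2=\norm{v}_2^2$. Consequently the event $\{S^2\ge\frac12\E S^2\}$ is contained in $\{\abs{S}\ge\norm{v}_2/\sqrt2\}$.

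\textbf{Fourth moment.} Write $S=\sum_i \xi_i$ with independent symmetric summands $\xi_j=\sqrt{2m}\,a_jY_j$ and $\xi_0=\eta b$. For such sums,
\[
  \E S^4=\sum_i\E\xi_i^4+3\sum_{i\ne k}\E\xi_i^2\,\E\xi_k^2
  \le 3\Bigl(\sum_i\E\xi_i^2\Bigr)^2+\sum_i\bigl(\E\xi_i^4-3(\E\xi_i^2)^2\bigr).
\]
It remains to check the sign of each correction term. For $\eta$, $\E\eta^4=1=(\E\eta^2)^2<3(\E\eta^2)^2$. For $Y_j$, $\E Y_j^4=1/2$ while $3(\E Y_j^2)^2=3/4$. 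Both corrections are negative, so $\E S^4\le3(\E S^2)^2$.

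Substituting these two claims into the Paley--Zygmund bound gives probability at least $\frac14\cdot\frac13=\frac1{12}$. None of the constants involve $m$. The case $a=0$ is harmless, and $\E S^2>0$ whenever $v\neq0$. The only delicate step is the first claim. There the $\sqrt{2m}$ scaling must dominate the anisotropy of the frame, and this uses the bound $G\preceq mI_m$ rather than any lower bound on $G$. The fourth-moment comparison is routine once the $Y_j$ are seen to be less kurtotic than Gaussians.
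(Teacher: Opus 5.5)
Your proposal is correct and follows the paper's proof essentially step for step: you use the decomposition $v=Ua+bh$, apply Paley--Zygmund to $S^2$ with $\theta=1/2$, obtain $\E S^2=m\norm{a}_2^2+b^2\ge\norm{v}_2^2$ via $G\preceq mI_m$, and bound $\E S^4\le3(\E S^2)^2$. The only cosmetic difference is in the fourth moment: you get it in one step from the negative excess kurtosis of all $m+1$ independent summands (your displayed inequality is in fact an equality), whereas the paper first computes $\E R^4$ and then expands $(W+b\eta)^4$.
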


\begin{proof}
Fix $v\ne0$ and use the unique representation $v=Ua+bh$ from
\cref{lem:linear-algebra}.  Set
\begin{equation}
  R:=a^\top Y=\sum_{j=1}^m a_jY_j,
  \qquad
  W:=\sqrt{2m}\,R,
  \qquad
  S:=W+b\eta.
  \label{eq:RWS}
\end{equation}
Equations \eqref{eq:row-law}, \eqref{eq:dual-identity}, and
$U^\top h=0$ give
\begin{equation}
  \ip{B}{v}
  =\sqrt{2m}\,Y^\top V^\top Ua+b\eta
  =\sqrt{2m}\,a^\top Y+b\eta
  =S,
  \qquad
  \ip{Z}{v}=HS.
  \label{eq:inner-as-S}
\end{equation}

We compute the second and fourth moments of $S$ without suppressing any
cross terms.  The law \eqref{eq:Y-law} gives
\[
  \E Y_j=0,
  \qquad
  \E Y_j^2=\frac12,
  \qquad
  \E Y_j^4=\frac12.
\]
Independence and centering imply
\begin{equation}
  \E R^2=\frac12\sum_{j=1}^m a_j^2
  =\frac12\norm{a}_2^2.
  \label{eq:R-second}
\end{equation}
In the expansion of $R^4$, the only nonzero monomials are those in
which every index occurs an even number of times.  Hence
\begin{align}
  \E R^4
  &=\sum_{j=1}^m a_j^4\E Y_j^4
    +6\sum_{1\le j<k\le m}a_j^2a_k^2
      \E Y_j^2\E Y_k^2 \notag\\
  &=\frac12\sum_{j=1}^m a_j^4
    +\frac32\sum_{j<k}a_j^2a_k^2 \notag\\
  &=\frac34\left(\sum_{j=1}^m a_j^2\right)^2
    -\frac14\sum_{j=1}^m a_j^4
  \le\frac34\norm{a}_2^4.
  \label{eq:R-fourth}
\end{align}
Multiplying \eqref{eq:R-second} and \eqref{eq:R-fourth} by the
appropriate powers of $\sqrt{2m}$ yields
\begin{equation}
  \E W^2=m\norm{a}_2^2,
  \qquad
  \E W^4\le3m^2\norm{a}_2^4.
  \label{eq:W-moments}
\end{equation}

The random variables $W$ and $\eta$ are independent and symmetric.
Thus their odd moments vanish, and expanding $(W+b\eta)^2$ and
$(W+b\eta)^4$ gives
\begin{align}
  D:=\E S^2
  &=m\norm{a}_2^2+b^2,
  \label{eq:S-second}\\
  \E S^4
  &=\E W^4+6b^2\E W^2+b^4 \notag\\
  &\le3m^2\norm{a}_2^4+6b^2m\norm{a}_2^2+b^4 \notag\\
  &\le3\bigl(m\norm{a}_2^2+b^2\bigr)^2
  =3D^2.
  \label{eq:S-fourth}
\end{align}
The penultimate inequality only replaces $b^4$ by $3b^4$.

On the other hand, $G\preceq mI_m$ and \eqref{eq:v-norm} show that
\begin{equation}
  \norm{v}_2^2=a^\top Ga+b^2
  \le m\norm{a}_2^2+b^2=D.
  \label{eq:D-dominates-v}
\end{equation}
Because $v\ne0$, $D>0$.  Apply \cref{lem:paley-zygmund} to
$Q=S^2$ with $\theta=1/2$.  Using \eqref{eq:S-fourth},
\begin{align}
  \Pp\left(\abs{S}\ge\sqrt{D/2}\right)
  &=\Pp\left(S^2\ge\frac12\E S^2\right) \\
  &\ge\frac14\frac{D^2}{\E S^4}
  \ge\frac1{12}.
  \label{eq:S-small-ball}
\end{align}
By \eqref{eq:D-dominates-v}, the event in
\eqref{eq:S-small-ball} implies
$\abs{S}\ge\norm{v}_2/\sqrt2$.  Finally, $H\ge1$ almost surely, so
$\abs{\ip{Z}{v}}=H\abs{S}\ge\abs{S}$.  This proves
\eqref{eq:proved-small-ball}.
\end{proof}

\subsection{The conic Gaussian width}

\begin{lemma}[A bounded-width convex cone section]
\label{lem:width}
The set $A$ in \eqref{eq:cone-set} satisfies
\begin{equation}
  w(A)\le\frac1{\sqrt{2\pi}}+\sqrt2<2.
  \label{eq:width-bound}
\end{equation}
\end{lemma}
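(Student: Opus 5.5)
Every $u\in A$ is a unit vector of the form $u=\sum_j\lambda_j u_j$ with $\lambda_j\ge0$. Writing $s:=\sum_j\lambda_j$ and using $u_j=(e_0+\delta e_j)/\sqrt{1+\delta^2}$, we get
\[
  u=\frac{s\,e_0+\delta\sum_{j=1}^m\lambda_j e_j}{\sqrt{1+\delta^2}},
  \qquad
  \ip{g}{u}=\frac{s\,g_0+\delta\sum_{j}\lambda_j g_j}{\sqrt{1+\delta^2}} .
\]
The plan is to bound the two pieces separately. This uses $\sqrt{1+\delta^2}\ge1$ and two constraints on $(\lambda_j)$ coming from $\norm{u}_2=1$.

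\textbf{Constraints.} The $e_0$-coordinate of $u$ is $s/\sqrt{1+\delta^2}$, so $s\le\sqrt{1+\delta^2}$. The remaining coordinates have norm $\delta\norm{\lambda}_2/\sqrt{1+\delta^2}\le1$. Only the first constraint is needed. Combined with $\lambda_j\ge0$, it gives
\[
  \sum_j\lambda_j g_j\le s\max_j g_j\le s\max_j (g_j)_+ .
\]
Dropping the positive part would be wrong when all $g_j<0$. There one uses $\lambda_j g_j\le0$, and $\max_j (g_j)_+$ handles both cases at once.

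\textbf{Bounding the supremum.} It follows that
\[
  \sup_{u\in A}\ip{g}{u}\le \frac{s}{\sqrt{1+\delta^2}}\bigl((g_0)_+ + \delta\max_j (g_j)_+\bigr)\le (g_0)_+ + \delta\max_j(g_j)_+ .
\]
Both terms are nonnegative, so bounding $s/\sqrt{1+\delta^2}\le1$ is legitimate. Taking expectations, $\E (g_0)_+=1/\sqrt{2\pi}$.

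\textbf{Maximal inequality.} The remaining task is $\E\max_{j\le m}(g_j)_+\le\sqrt{2\log(2m)}$. This is the standard bound obtained from $\E\max_j (g_j)_+\le \E\max(0,g_1,\dots,g_m)$ together with the Jensen/moment-generating-function argument. For any $t>0$,
\[
  \exp\bigl(t\,\E\max\bigr)\le \E\exp(t\max)\le 1+m e^{t^2/2}\le 2m\,e^{t^2/2},
\]
so $\E\max\le \log(2m)/t+t/2$. Optimizing at $t=\sqrt{2\log(2m)}$ gives $\sqrt{2\log(2m)}$. Multiplying by $\delta=1/\sqrt{\log(2m)}$ yields $\sqrt2$. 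Hence $w(A)\le 1/\sqrt{2\pi}+\sqrt2\approx 0.399+1.414<2$.

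\textbf{Main obstacle.} The only delicate point is handling signs: the coefficient $s/\sqrt{1+\delta^2}$ must multiply nonnegative quantities before it is bounded by $1$. Replacing $g_0$ and $\max_j g_j$ by their positive parts takes care of this. The choice $\delta=1/\sqrt{\log(2m)}$ is tuned exactly so that the maximal inequality with the $2m$ count (which absorbs the extra $0$ term) yields the constant $\sqrt2$.
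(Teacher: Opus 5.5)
Your proof is correct and is essentially the paper's argument. Your factor $s/\sqrt{1+\delta^2}\le 1$ (the $e_0$-coordinate of a unit vector) is exactly the paper's $1/d_\lambda$ after normalizing $\lambda$ to the simplex, and you likewise reduce to $(g_0)_+$ plus $\delta$ times a Gaussian maximum. The only cosmetic difference is that you bound the one-sided $\E\max(0,g_1,\dots,g_m)$ via $1+me^{t^2/2}\le 2me^{t^2/2}$, whereas the paper bounds $\E\max_j\abs{g_j}$ using $e^{t\abs{x}}\le e^{tx}+e^{-tx}$; both give $\sqrt{2\log(2m)}$.
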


\begin{proof}
Every nonzero vector in $C$ has the form $Ua$ with
$a=(a_1,\ldots,a_m)^\top\ge0$.  Let
$s:=\sum_{j=1}^m a_j>0$ and $\lambda_j:=a_j/s$.  Then
$\lambda=(\lambda_1,\ldots,\lambda_m)$ belongs to the simplex
\[
  \Delta_m:=\left\{\lambda\in\R^m:
  \lambda_j\ge0,\ \sum_{j=1}^m\lambda_j=1\right\}.
\]
Substitution of \eqref{eq:directions} shows
\[
  Ua=\frac{s}{\sqrt{1+\delta^2}}
  \left(e_0+\delta\sum_{j=1}^m\lambda_je_j\right).
\]
After normalization, we obtain the exact parameterization
\begin{equation}
  A=\left\{
  x(\lambda):=
  \frac{e_0+\delta\sum_{j=1}^m\lambda_je_j}
  {\sqrt{1+\delta^2\norm{\lambda}_2^2}}:
  \lambda\in\Delta_m
  \right\}.
  \label{eq:A-parameterization}
\end{equation}

Let $g=(g_0,g_1,\ldots,g_m)\sim N(0,I_{m+1})$ and
$M:=\max_{1\le j\le m}g_j$.  For each $\lambda\in\Delta_m$, put
\[
  N_\lambda:=g_0+\delta\sum_{j=1}^m\lambda_jg_j,
  \qquad
  d_\lambda:=\sqrt{1+\delta^2\norm{\lambda}_2^2}.
\]
Since $\sum_j\lambda_jg_j\le M$, we have
$N_\lambda\le g_0+\delta M$, while $d_\lambda\ge1$.  If
$N_\lambda\ge0$, then
\[
  \frac{N_\lambda}{d_\lambda}
  \le N_\lambda\le(g_0+\delta M)_+.
\]
If $N_\lambda<0$, then
$N_\lambda/d_\lambda<0\le(g_0+\delta M)_+$.  Therefore, pointwise in
$g$,
\begin{equation}
  \sup_{x\in A}\ip{g}{x}
  \le(g_0+\delta M)_+
  \le(g_0)_++\delta\max_{1\le j\le m}\abs{g_j}.
  \label{eq:pointwise-width}
\end{equation}

It remains to bound the Gaussian maximum.  Let
$Q:=\max_j\abs{g_j}$.  For any $t>0$, Jensen's inequality and
$e^{t\abs{x}}\le e^{tx}+e^{-tx}$ give
\begin{align*}
  t\E Q
  &\le\log\E e^{tQ}
  \le\log\left(\sum_{j=1}^m\E e^{t\abs{g_j}}\right)\\
  &\le\log(2m)+\frac{t^2}{2}.
\end{align*}
Dividing by $t$ and taking
$t=\sqrt{2\log(2m)}$ yields
\begin{equation}
  \E\max_j\abs{g_j}\le\sqrt{2\log(2m)}.
  \label{eq:gaussian-max}
\end{equation}
Also, direct integration of the standard normal density gives
$\E(g_0)_+=1/\sqrt{2\pi}$.  Taking expectations in
\eqref{eq:pointwise-width}, using \eqref{eq:gaussian-max}, and recalling
$\delta=1/\sqrt{\log(2m)}$, we conclude that
\[
  w(A)
  \le\frac1{\sqrt{2\pi}}
  +\frac{\sqrt{2\log(2m)}}{\sqrt{\log(2m)}}
  =\frac1{\sqrt{2\pi}}+\sqrt2<2.
\]
\end{proof}

\subsection{A missing-label event}

\begin{lemma}[Annihilation with overwhelming probability]
\label{lem:annihilation}
Let $X\in\R^{n\times(m+1)}$ have independent copies
$Z_1,\ldots,Z_n$ of $Z$ as its rows.  Then
\begin{equation}
  \Pp\left(\inf_{u\in A}\norm{Xu}_2^2=0\right)
  \ge1-(1-2^{-n})^m
  \ge1-\exp(-m2^{-n}).
  \label{eq:annihilation-probability}
\end{equation}
\end{lemma}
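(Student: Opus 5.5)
The plan is to exhibit an explicit event that forces the infimum to vanish, then bound the probability of that event by independence. Let $Y^{(i)}=(Y^{(i)}_1,\ldots,Y^{(i)}_m)$ and $H_i$ denote the labels and heavy-tailed scalar attached to row $Z_i$. For each $1\le j\le m$, define the event
\[
  E_j:=\bigl\{Y^{(i)}_j=0\text{ for every }1\le i\le n\bigr\}.
\]
The set $A$ contains $u_j$. Indeed, $u_j\in C$ and $\norm{u_j}_2=1$ by \eqref{eq:directions}. Applying \cref{lem:labels} row by row, on $E_j$ we get $\ip{Z_i}{u_j}=H_i\sqrt{2m}\,Y^{(i)}_j=0$ for all $i$. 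Hence $Xu_j=0$, and therefore $\inf_{u\in A}\norm{Xu}_2^2\le\norm{Xu_j}_2^2=0$. Since the infimum is nonnegative, it equals $0$. So
\[
  \Bigl\{\inf_{u\in A}\norm{Xu}_2^2=0\Bigr\}\supseteq\bigcup_{j=1}^m E_j.
\]
Note that $\norm{Xu}_2^2$ is defined pointwise and $H_i<\infty$ almost surely, so no integrability issue arises. Measurability of the event is immediate because it contains this union, and we only need a lower bound on its probability; if one prefers, one can work with the outer probability or note that the infimum over the compact set $A$ of a continuous function is measurable.

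Next I will compute the probability of the complement of the union. The rows are independent, and within each row the coordinates $Y_1,\ldots,Y_m$ are independent. Hence the whole family $\{Y^{(i)}_j\}_{i\le n,\,j\le m}$ is independent with the law \eqref{eq:Y-law}. It follows that the events $E_1,\ldots,E_m$ are independent, each with $\Pp(E_j)=2^{-n}$. Therefore
\[
  \Pp\Bigl(\bigcap_j E_j^c\Bigr)=(1-2^{-n})^m.
\]
This gives the first inequality in \eqref{eq:annihilation-probability}. The second inequality follows from $1-x\le e^{-x}$ with $x=2^{-n}$.

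There is no real obstacle here. The only points needing care are verifying that each $u_j$ actually lies in $A$, and stating the joint independence of the labels across rows and coordinates explicitly. The exact vanishing of $\ip{Z}{u_j}$ comes entirely from \cref{lem:labels}: the term $\eta h$ drops out because $h\perp u_j$, and the factor $H$ multiplies $0$. Finally, with $m=4^n$ (so $p=4^n+1$), the resulting bound is $1-\exp(-2^n)$, which is item (iv) of \cref{thm:counterexample}.
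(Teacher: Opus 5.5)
Your proposal is correct and follows the paper's proof essentially step for step: the same missing-label events $E_j$, the same use of \cref{lem:labels} to get $Xu_j=0$ on $E_j$, independence of the labels across rows and coordinates, and then $1-x\le e^{-x}$. One small correction on measurability: containing a measurable union does not by itself make the event measurable, so rely instead on the compactness of $A$ and the continuity of $u\mapsto\norm{Xu}_2^2$, which you also mention.
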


\begin{proof}
Write $Y_{ij}$ for the $j$th label in row $i$, and define
\[
  E_j:=\{Y_{1j}=Y_{2j}=\cdots=Y_{nj}=0\}.
\]
The labels are independent over both $i$ and $j$.  Hence
$\Pp(E_j)=(1/2)^n=2^{-n}$, and the events
$E_1,\ldots,E_m$ are mutually independent because they are determined
by disjoint collections of labels.  The row multipliers $H_i$ and signs
$\eta_i$ do not enter these events.

By \cref{lem:labels}, on $E_j$ every entry of $Xu_j$ is zero, so
$Xu_j=0$.  Moreover $u_j\in A$: in
\eqref{eq:A-parameterization}, choose $\lambda_j=1$ and all other
coordinates zero.  It follows that
\[
  \bigcup_{j=1}^mE_j
  \subseteq
  \left\{\inf_{u\in A}\norm{Xu}_2^2=0\right\}.
\]
Independence gives
\[
  \Pp\left(\bigcup_{j=1}^mE_j\right)
  =1-\prod_{j=1}^m\Pp(E_j^c)
  =1-(1-2^{-n})^m.
\]
Finally, $1-x\le e^{-x}$ for $x\ge0$ (because
$\log(1-x)\le-x$ for $0\le x<1$), so
$(1-2^{-n})^m\le\exp(-m2^{-n})$.  This proves
\eqref{eq:annihilation-probability}.
\end{proof}

\subsection{Heavy-tailedness and completion of the proof}

\begin{lemma}[The row law is centered and genuinely heavy-tailed]
\label{lem:heavy-tail}
The vector $Z$ in \eqref{eq:row-law} is centered and has finite
covariance.  Every standard coordinate $Z_k$, $0\le k\le m$, has
infinite fourth moment and a polynomial lower tail.
\end{lemma}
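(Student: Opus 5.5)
The argument rests on the product structure $Z=HB$, where $H$ is independent of $B=\sqrt{2m}\,VY+\eta h$. The vector $B$ takes finitely many values and is therefore bounded, say $\norm{B}_2\le K$ with $K$ depending on $m$. For centering, independence gives $\E Z=\E H\,\E B$. Here $\E H=3/2<\infty$, and $\E B=\sqrt{2m}\,V\E Y+h\,\E\eta=0$ because $\E Y_j=0$ and $\E\eta=0$. For finite covariance, $\E\norm{Z}_2^2=\E H^2\,\E\norm{B}_2^2\le K^2\E H^2$, and $\E H^2=\int_1^\infty 2t\cdot t^{-3}\,dt\cdot$ (plus the mass at $1$) is finite; it equals $3$.

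For the coordinate statements I fix $k$ and write $Z_k=HB_k$ with $B_k=\ip{B}{e_k}$. The key step is to show that $B_k$ is not almost surely zero, i.e.\ that $\Pp(B_k\ne0)=:\rho_k>0$. Once that holds, let $c_k>0$ be the smallest nonzero value of $\abs{B_k}$. Independence then gives the polynomial lower tail
\[
  \Pp(\abs{Z_k}\ge t)\ge\Pp(\abs{B_k}\ge c_k)\,\Pp(H\ge t/c_k)=\rho_k c_k^3t^{-3}
\]
for $t\ge c_k$. It also gives $\E Z_k^4=\E H^4\,\E B_k^4=\infty$, since $\E B_k^4\ge\rho_kc_k^4>0$ and $\E H^4=\infty$. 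The upper tail is polynomial as well, because $\abs{Z_k}\le KH$, so the statement ``polynomial tails'' is covered in both directions.

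To show $B_k\not\equiv0$ I use the atom $Y=0$, which has probability $2^{-m}>0$. On that atom $B=\eta h$, so $B_k=\pm h_k$. For $k=0$ we have $h_0=\delta/\sqrt{m+\delta^2}\ne0$, and for $k\ge1$ we have $h_k=-1/\sqrt{m+\delta^2}\ne0$. Every coordinate of $h$ is nonzero, so $\Pp(B_k\ne0)\ge2^{-m}>0$ for every $0\le k\le m$. This disposes of what I expected to be the only real obstacle: one might worry that the dual frame $V$ produces coordinates in which $B$ degenerates, but the normal term $\eta h$ rules this out directly, with no need to compute $V=UG^{-1}$ explicitly.
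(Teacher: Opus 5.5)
Your proof is correct and follows essentially the paper's route: independence of $H$ from the finitely supported $B$ gives centering and finite covariance, and the event $\{Y=0\}$, where $B=\eta h$ and every coordinate of $h$ is nonzero, gives $\E Z_k^4=\infty$ and the $t^{-3}$ lower tail. The differences are cosmetic: you center $B$ by linearity rather than by the symmetry $(Y,\eta)\mapsto(-Y,-\eta)$, and you use the smallest nonzero value of $\abs{B_k}$ instead of restricting to that event. One small wording slip: ``plus the mass at $1$'' is imprecise, since $H$ has no atom at $1$. The extra $1$ in $\E H^2=3$ comes from $\int_0^1 2t\,dt$ in the layer-cake formula.
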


\begin{proof}
The vector $B$ has finite support and is symmetric: replacing
$(Y,\eta)$ by $(-Y,-\eta)$ changes $B$ to $-B$ without changing its
law.  Thus $\E B=0$.  The Pareto law \eqref{eq:H-law} satisfies
$\E H<\infty$ and $\E H^2<\infty$.  Independence therefore gives
$\E Z=\E H\,\E B=0$ and
\[
  \E[ZZ^\top]=\E H^2\,\E[BB^\top],
\]
which is finite entrywise.

Let $F:=\{Y_1=\cdots=Y_m=0\}$.  Then
$\Pp(F)=2^{-m}>0$, and on $F$ we have $B=\eta h$.  Every standard
coordinate of $h$ is nonzero: its zeroth coordinate is
$\delta/\sqrt{m+\delta^2}$, and each remaining coordinate is
$-1/\sqrt{m+\delta^2}$.  By independence and Tonelli's theorem,
for every $k$,
\[
  \E\abs{Z_k}^4
  \ge\E\bigl[H^4\abs{B_k}^4\mathbf{1}_F\bigr]
  =\E H^4\,\Pp(F)\abs{h_k}^4
  =\infty.
\]
More explicitly, for $t\ge\abs{h_k}$,
\begin{align*}
  \Pp(\abs{Z_k}\ge t)
  &\ge\Pp\bigl(F,\ H\abs{h_k}\ge t\bigr)\\
  &=2^{-m}\left(\frac{\abs{h_k}}{t}\right)^3,
\end{align*}
using \eqref{eq:H-law}.  For the matching upper order, define the finite
constant $M_k:=\max\{\abs{b_k}:b\in\operatorname{supp}(B)\}$.  Since
$M_k>0$, for $t\ge M_k$,
\[
  \Pp(\abs{Z_k}\ge t)
  \le \Pp(HM_k\ge t)
  =\left(\frac{M_k}{t}\right)^3.
\]
Thus each coordinate has a two-sided polynomial tail bound of order
$t^{-3}$ up to constants depending on the coordinate and on $m$.
\end{proof}

\begin{proof}[Proof of \cref{thm:counterexample}]
Given $n\ge1$, take $m=4^n$ and $p=m+1$, and use the construction in
\cref{sec:construction}.  A cone generated by finitely many vectors is
polyhedral and closed here; indeed, $U$ is injective and
$C=U\R_+^m$, the image of a closed orthant under a linear isomorphism
onto $\range(U)$.  Thus $C$ is a closed polyhedral convex cone.

The row properties in part (i) follow from
\cref{lem:heavy-tail}.  Part (ii) is exactly
\cref{lem:small-ball}, and part (iii) follows from
\cref{lem:width}.  For part (iv), apply
\cref{lem:annihilation} and use
\[
  m2^{-n}=4^n2^{-n}=2^n.
\]
This yields
\[
  \Pp\left(\inf_{u\in A}\norm{Xu}_2^2=0\right)
  \ge1-\exp(-2^n),
\]
as required.
\end{proof}

\begin{proof}[Proof of \cref{cor:negative-answer}]
Suppose $c_1,c_2>0$ are fixed.  By \cref{thm:counterexample},
$w(A)^2<4$.  Therefore, whenever $n>4c_2/c_1$,
\[
  c_1n-c_2w(A)^2>c_1n-4c_2>0.
\]
On the event in \eqref{eq:theorem-failure}, the left-hand side of the
proposed restricted-eigenvalue inequality is zero while its right-hand
side is strictly positive.  Thus the proposed inequality fails with
probability at least $1-\exp(-2^n)$, and its probability of holding is
at most $\exp(-2^n)$.  This cannot be bounded below by $1-q_n$ for any
$q_n\to0$.
\end{proof}

\section{Interpretation, limits, and possible repairs}
\label{sec:discussion}

\subsection{Why the marginal condition is insufficient}

The small-ball condition is pointwise before the infimum over the random
sample: each deterministic direction has a fixed chance to produce a
measurement of order its Euclidean norm.  In our construction, this is
true uniformly over every direction in the ambient space.  Nevertheless,
the sample is asked to work simultaneously over exponentially many
candidate rays.  Direction $u_j$ depends only on label $Y_j$, and that
label is missing from all $n$ rows with probability $2^{-n}$.  With
$m=4^n$ independent labels, at least one all-zero label occurs with
probability at least $1-e^{-2^n}$.

Euclidean Gaussian width does not see this distribution-specific label
structure.  The rays all lie in an angular neighborhood of $e_0$ whose
radius is of order $1/\sqrt{\log m}$, so the increase from maximizing over
$m$ Gaussian coordinates is exactly canceled by that angular scale.

\subsection{Relation to the small-ball method}

The general small-ball method does not replace the distribution of $Z$
by a standard Gaussian without further assumptions.  A typical
complexity term is the mean empirical width
\begin{equation}
  W_n(A;Z):=
  \E\sup_{u\in A}
  \ip{u}{\frac1{\sqrt n}\sum_{i=1}^n\varepsilon_iZ_i},
  \label{eq:empirical-width}
\end{equation}
where the $\varepsilon_i$ are independent Rademacher signs, independent
of the rows.  Bounds of this type appear in the small-ball framework of
\citet{koltchinskii2015bounding,mendelson2015learning,tropp2015convex}.
For sub-Gaussian rows, increment bounds can compare
$W_n(A;Z)$ with $w(A)$.  The hypothesis
\eqref{eq:global-small-ball} is only a lower-tail statement and provides
no such upper comparison.  Our example is therefore consistent with the
known theory and pinpoints the missing step in the proposed conclusion.

\subsection{What the theorem does and does not settle}

The following qualifications are part of the theorem's scope.
\begin{enumerate}[(a)]
  \item \emph{Uniform constants.}  The counterexample rules out constants
  controlled only by the fixed small-ball parameters, or by other fixed
  universal parameters shared by the family.  It does not assign meaning
  to constants allowed to depend arbitrarily on $n,p,A$, and the full row
  law.
  \item \emph{A high-dimensional family.}  The dimension and row law vary
  with $n$.  This is the standard way to refute a dimension-uniform sample
  complexity theorem.  It is not a claim that one fixed finite-dimensional
  model remains singular as $n\to\infty$.
  \item \emph{Anisotropy.}  The row law is not isotropic.  Isotropy was not
  assumed in the 2015 statement.  Adding it would be a materially stronger
  question.  Merely whitening this construction also changes the geometry
  of the restricted directions and their Euclidean Gaussian width.
  \item \emph{Dependence within a row.}  Coordinates of $Z$ are dependent,
  while rows are independent and identically distributed.  The original
  formulation requires row independence, not coordinate independence.
  \item \emph{Natural restricted geometry.}  The set used here is the
  spherical section of a closed convex cone, not just an arbitrary finite
  subset.  It therefore matches the basic descent-cone form motivating
  restricted eigenvalues, although we do not claim that it is the descent
  cone of a particular named regularizer at a prescribed signal.
\end{enumerate}

A positive theorem with Gaussian-width sample complexity needs some
assumption that connects the row distribution's upper increments to
Euclidean geometry.  Sub-Gaussian increments provide the classical
route.  Other plausible formulations could use the empirical width
\eqref{eq:empirical-width} directly, or combine isotropic normalization
with explicit moment, Orlicz, truncation, or generic-Bernstein controls.
The present theorem says that the marginal small-ball lower bound alone
cannot supply this missing information.

\section{Conclusion}
\label{sec:conclusion}

The small-ball condition controls the probability of a useful
measurement in each fixed direction.  It does not, by itself, control the
complexity of making that statement uniform over a restricted set.  The
explicit construction in this paper separates these two tasks: the
small-ball constants are fixed on the entire ambient space, and the
restricted set is a bounded-width convex-cone section, yet an independent
missing-label mechanism annihilates one restricted ray with probability
tending to one doubly exponentially in the chosen parameterization.

Consequently, the natural distribution-free version of the 2015 COLT
question has a negative answer.  The Euclidean Gaussian width cannot
replace the distribution-dependent empirical width under a marginal
small-ball assumption alone.  A viable positive reformulation must retain
that empirical complexity or add assumptions that compare the row
process to Euclidean Gaussian geometry.  Determining sharp minimal
conditions for such a comparison---especially under isotropic but
non-sub-Gaussian designs---remains a substantive direction for further
work.

\section{Disclosure}
\label{sec:disclosure}
The proof strategy and counterexample were produced by OpenAI’s GPT-5.6 Sol Ultra through Codex in
response to prompts from the author. Codex was also used to revise
the exposition and prepare the LaTeX manuscript. The author selected the
problem, directed the interactions and revisions, and is the sole named author. The AI system is acknowledged as a reasoning and writing tool, not
as an author. This disclosure is not a substitute for independent expert
mathematical review.
\bibliographystyle{plainnat}
\bibliography{references}

\appendix
\section{Supplementary calculations and status notes}
\label{app:supplementary}

\subsection{An explicit formula for the dual frame}

Although only the identities $U^\top V=V^\top U=I_m$ are needed in the
proof, the inverse of the Gram matrix can be written explicitly.  From
\eqref{eq:gram} and the rank-one inverse formula,
\begin{equation}
  G^{-1}
  =\frac{1+\delta^2}{\delta^2}
  \left(I_m-\frac{\one\one^\top}{m+\delta^2}\right).
  \label{eq:explicit-G-inverse}
\end{equation}
To verify this directly, multiply the two unscaled factors:
\begin{align*}
  (\delta^2I_m+\one\one^\top)
  \left(I_m-\frac{\one\one^\top}{m+\delta^2}\right)
  &=\delta^2I_m+\one\one^\top
    -\frac{\delta^2\one\one^\top+m\one\one^\top}
    {m+\delta^2}\\
  &=\delta^2I_m.
\end{align*}
After including the scalar factors from \eqref{eq:gram} and
\eqref{eq:explicit-G-inverse}, the product is $I_m$.

\subsection{The covariance is finite and uniformly nondegenerate}

The proof of \cref{lem:small-ball} also yields a useful normalization
fact.  For $v=Ua+bh$, \eqref{eq:S-second} gives
\[
  \E\ip{B}{v}^2=m\norm{a}_2^2+b^2\ge\norm{v}_2^2.
\]
Since $B$ is centered, this means
\begin{equation}
  \operatorname{Cov}(B)\succeq I_{m+1}.
  \label{eq:B-cov-lower}
\end{equation}
For the Pareto law in \eqref{eq:H-law}, the density on $[1,\infty)$ is
$3t^{-4}$, so
\[
  \E H^2=\int_1^\infty 3t^{-2}\,dt=3.
\]
Independence therefore gives
\begin{equation}
  \operatorname{Cov}(Z)
  =3\operatorname{Cov}(B)\succeq3I_{m+1}.
  \label{eq:Z-cov-lower}
\end{equation}
Thus the counterexample is not caused by a zero-variance ambient
direction.  Its covariance is finite and positive definite, but highly
anisotropic.

\subsection{Robustness to an absolute-width convention}

Some authors use the symmetrized quantity
\[
  w_{\mathrm{abs}}(A):=
  \E\sup_{u\in A}\abs{\ip{g}{u}}.
\]
The same conic set has bounded width under this convention.  Indeed,
from \eqref{eq:A-parameterization},
\begin{align*}
  \sup_{u\in A}\abs{\ip{g}{u}}
  &\le\sup_{\lambda\in\Delta_m}
  \abs{g_0+\delta\textstyle\sum_j\lambda_jg_j}\\
  &\le\abs{g_0}+\delta\max_j\abs{g_j}.
\end{align*}
Taking expectations and using \eqref{eq:gaussian-max},
\begin{equation}
  w_{\mathrm{abs}}(A)
  \le\sqrt{\frac2\pi}+\sqrt2,
  \label{eq:absolute-width-bound}
\end{equation}
an absolute constant.  Hence the obstruction is not an artifact of the
one-sided convention in \eqref{eq:gaussian-width}.

\subsection{Why whitening does not preserve the counterexample's width}

Because \eqref{eq:Z-cov-lower} gives a finite positive-definite
covariance, one may ask whether whitening turns the example into an
isotropic counterexample.  Whitening changes both the row distribution
and the Euclidean geometry against which Gaussian width is measured.  In
the finite-support base construction, the dual directions are stretched
very differently from their common $e_0$ component.  After such a linear
change of variables, the corresponding restricted directions are no
longer an exponentially large family inside an angular neighborhood of
radius $1/\sqrt{\log m}$.  Therefore the bounded-width estimate in
\cref{lem:width} is not invariant under whitening.  The present proof
does not settle a formulation that assumes isotropy from the outset.

\subsection{Literature-status protocol}
\label{app:status}

The source problem is the four-page COLT note of
\citet{banerjee2015open}.  The following checks were performed through
August 16, 2026:
\begin{enumerate}[(1)]
  \item exact-title searches for the open-problem note together with the
  terms \emph{solution}, \emph{counterexample}, \emph{Gaussian width}, and
  \emph{small ball};
  \item searches for later works by the three authors that cite the
  Gaussian-width formulation; and
  \item comparison with later small-ball and heavy-tailed generalized
  Lasso results, including
  \citet{tropp2015convex,mendelson2017extending,genzel2022generic}.
\end{enumerate}
These searches located the original problem and later results using
distribution-adapted empirical or mixed-tail complexities, but no paper
claiming an explicit resolution of the exact small-ball-only question.
This search record cannot prove novelty: citation databases may be
incomplete, terminology may differ, and unpublished work may exist.
Before submission, the appropriate next steps are a formal MathSciNet and
zbMATH citation search, a current citation-index audit, and contact with
the original authors.

The result proved here should therefore be described as an
\emph{apparently unpublished negative resolution of the natural uniform
formulation}, not as a certified first solution.

\end{document}